\newcommand{\diam}{\mathop{\operator@font diam}}
\newtheorem{definition}{Definition}[section]
\newtheorem{theorem}{Theorem}[section]
\begin{document}

\title{\Huge{\textsc{On Sliced Spaces: Global Hyperbolicity revisited}}}

\author{Kyriakos Papadopoulos$^1$, Nazli Kurt$^2$, Basil K. Papadopoulos$^3$\\
\small{1. Department of Mathematics, Kuwait University, PO Box 5969, Safat 13060, Kuwait}\\
\small{2. Open University, UK}\\
\small{3. Department of Civil Engineering, Democritus University of Thrace, Greece}\\
E-mail: \textrm{ kyriakos@sci.kuniv.edu.kw}
}

\date{}

\maketitle

\begin{abstract}
We give a topological condition for a generic sliced space to be globally hyperbolic, without any hypothesis on the lapse function, shift function and spatial metric.
\end{abstract}

\section{Preliminaries.}

We begin with the definition of a {\em sliced space}, that one can read in \cite{Cotsakis}, as a continuation of a study in \cite{Choquet} and \cite{ChoquetCotsakis} on systems of Einstein equations.

Let $V = M \times I$, where $M$ is an $n$-dimensional smooth manifold and $I$ is an interval of the real line, $\mathbb{R}$. We equip $V$ with a $n+1$-dimensional Lorentz metric $g$, which splits in the following way: 
\[g = -N^2 (\theta^0)^2 + g_{ij} \theta^i \theta^j,\]
where $\theta^0 = dt$, $\theta^i = dx^i + \beta^i dt$,
$N = N(t,x^i)$ is the {\em lapse function}, $\beta^i(t,x^j)$
is the {\em shift function} and $M_t = M \times \{t\}$, spatial
slices of $V$, are spacelike submanifolds equipped with the time-dependent spatial metric $g_t = g_{ij} dx^i dx^j$. Such a product space $V$ is called a {\em sliced space}. 


Throughout the paper, we will consider $I = \mathbb{R}$.

The author in \cite{Cotsakis} considered sliced spaces with uniformly bounded lapse, shift and spatial metric; by this hypothesis, it is ensured that parametre $t$ measures up to a positive factor bounded (below and above) the time along the normals to spacelike slices $M_t$, the $g_t$ norm of the shift vector $\beta$ is uniformly bounded by a number and the time-dependent metric $g_{ij} dx^i dx^j$ is uniformly bounded (below and above) for all $t \in I (=\mathbb{R})$, respectively.  

Given the above hypothesis, in the same article the following theorem is proved.

\begin{theorem}[Cotsakis] Let $(V,g)$ be a sliced space with uniformly bounded lapse $N$, shift $\beta$ and spatial metric $g_t$. Then, the following are equivalent:

\begin{enumerate}
\item $(M_0,\gamma)$ a complete Riemannian manifold.
\item The spacetime $(V,g)$ is globally hyperbolic.
\end{enumerate}
\end{theorem}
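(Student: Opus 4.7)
The plan is to prove the two implications separately, exploiting that the uniform bounds on $N$, $\beta$, and $g_t$ translate the Lorentzian causality condition on $(V,g)$ into a Lipschitz-type estimate involving the Riemannian metric $\gamma$ on $M_0$.

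For $(1)\Rightarrow(2)$, I would show that $M_0 = M \times \{0\}$ is a Cauchy hypersurface; global hyperbolicity then follows from Geroch's theorem. Because each slice $M_t$ is spacelike, $dt$ is timelike, so $t$ is strictly monotone along every future-directed causal curve, which can therefore be parametrized by $t$ and meets $M_0$ at most once. Substituting a causal curve $\sigma(t) = (x(t), t)$ into the inequality
\[
-N^2 + g_{ij}(\dot x^i + \beta^i)(\dot x^j + \beta^j) \leq 0
\]
and invoking the uniform bounds on $N$, $\beta$, $g_t$ (with $g_t$ uniformly equivalent to $\gamma$), I would extract a constant $C>0$ with $|\dot x|_\gamma \leq C$. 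If an inextendible such curve had finite upper $t$-endpoint $t_+$, then $\{x(t_n)\}$ would be $\gamma$-Cauchy for any $t_n \nearrow t_+$; completeness would produce a limit point $(x_+, t_+)$ of the curve, contradicting inextendibility. Hence $t$ runs through all of $\mathbb{R}$ on any inextendible causal curve, so each such curve meets $M_0$ in exactly one point.

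For $(2)\Rightarrow(1)$, I would argue by contraposition. If $(M_0,\gamma)$ is not complete, choose a $\gamma$-Cauchy sequence $\{x_n\}$ with no limit, and extract from the uniform bounds a constant $c>0$ such that a future-directed causal curve from $(x,s)$ to $(y,t)$ exists whenever $t-s \geq c\,d_\gamma(x,y)$. Passing to a subsequence with summable $\gamma$-increments, pick $t_n \nearrow T < \infty$ with $t_{n+1} - t_n \geq c\,d_\gamma(x_n,x_{n+1})$, so that the points $p_n = (x_n,t_n)$ sit on a single future-directed causal curve emanating from $p_1$. Since Cauchy sequences are bounded, set $R = \sup_n d_\gamma(x_1,x_n) < \infty$ and take $q = (x_1, T + cR + 1)$; then $q \in J^+(p_n)$ for every $n$, so $\{p_n\} \subset J^+(p_1) \cap J^-(q)$, which is compact by global hyperbolicity. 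A convergent subsequence of $p_n$ in $V$ then projects to a limit of the corresponding subsequence of $\{x_n\}$ in $M_0$, contradicting the choice of the sequence.

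The main obstacle I expect is the careful verification of the two-sided ``speed of light'' estimate. The upper bound $|\dot x|_\gamma \leq C$ along causal curves is a direct algebraic consequence of the bounds and the causality inequality. The lower bound, namely the construction of future-directed causal curves joining prescribed endpoints in controlled elapsed time, requires producing a suitable reparametrization $\tau(s)$ along a $\gamma$-geodesic in $M$ so that the lifted curve $(\alpha(s), \tau(s))$ in $V$ is causal; this becomes delicate when $|\beta|_{g_t}$ approaches the lapse $N$. Once both estimates are in place, each direction reduces to a standard dichotomy between metric completeness and causal-diamond compactness.
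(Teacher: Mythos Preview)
The paper does not contain a proof of this statement. Theorem~1.1 is attributed to Cotsakis and is quoted from reference~\cite{Cotsakis}; the present paper only states it as background before going on to prove its own Theorems~\ref{1} and~\ref{2}. There is therefore no ``paper's own proof'' against which to compare your proposal.

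That said, your outline is the standard route to this result and is essentially what one finds in the Choquet-Bruhat--Cotsakis and Cotsakis papers \cite{ChoquetCotsakis,Cotsakis}: translate the uniform bounds on $N$, $\beta$, $g_t$ into a two-sided Lipschitz comparison between causal cones of $g$ and metric balls of $\gamma$, then use one inequality to show $M_0$ is Cauchy when $(M_0,\gamma)$ is complete, and the other to trap a non-convergent $\gamma$-Cauchy sequence inside a causal diamond when $(V,g)$ is globally hyperbolic. The obstacle you flag---constructing causal curves of controlled $t$-length over prescribed spatial displacement---is exactly the point where the hypothesis that $|\beta|_{g_t}$ is uniformly bounded (strictly below $N$, in effect) does real work. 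One small caveat in your $(2)\Rightarrow(1)$ argument: you lift ``a $\gamma$-geodesic in $M$'', but since $(M_0,\gamma)$ is assumed incomplete you cannot take existence of minimizing geodesics for granted; it suffices, however, to use arbitrary curves of length within a factor of $d_\gamma(x_n,x_{n+1})$, or to note that consecutive $x_n$ eventually lie in a common convex normal neighbourhood.
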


In this article we review global hyperbolicity of sliced spaces, in terms of the product topology defined on the space $M \times \mathbb{R}$, for some finite dimensional smooth manifold $M$.

\section{Strong Causality of Sliced Spaces.}

Let $(V = M \times \mathbb{R},g)$ be a sliced space. Consider  the product topology $T_P$, on $V$. A base for $T_P$ consists of all sets of the form $A\times B$, where $A \in T_M$ and $B \in T_{\mathbb{R}}$. Here $T_M$ denotes the natural topology of the manifold $M$ where, for an appropriate Riemann metric $h$, it has a base consisting of open balls $B_{\epsilon}^h (x)$ and $T_{\mathbb{R}}$ is the usual topology on the real line,  with a base consisting of open intervals $(a,b)$.
For trivial topological reasons, we can restrict our discussion on $T_P$ to basic-open sets $B_{\epsilon}^h (x) \times (a,b)$, which can be intuitively called as ``open cylinders'' in $V$.

We remind the Alexandrov topology $T_A$ (see \cite{Penrose-difftopology}) has a base consisting of open sets of the form $<x,y> = I^+(x) \cap I^-(y)$, where $I^+(x) = \{z \in V : x \ll z\}$ and $I^-(y) = \{z \in V : z \ll y\}$, where $\ll$ is the {\em chronological order} defined as $x \ll y$ iff there exists a future oriented timelike curve, joining $x$ with $y$. By $J^+(x)$ one denotes the topological closure of $I^+(x)$ and by $J^-(y)$ that one of $I^-(y)$.

We use the definition of strong causality and global hyperbolicity from \cite{Penrose-difftopology}; global hyperbolicity is an important causal condition in a spacetime related to major problems such as spacetime singularities, cosmic cencorship etc.

\begin{definition}
A spacetime is globally hyperbolic, iff it is strongly causal and the ``causal diamonds'' $J^+(x) \cap J^-(y)$ are compact.
\end{definition}

We prove the following theorem.

\begin{theorem}\label{1}
Let $(V,g)$ be sliced space. Then, the following are equivalent.
\begin{enumerate}
\item $V$ is strongly causal.

\item $T_A \equiv T_P$.

\item $T_A$ is Hausdorff.
\end{enumerate}
\end{theorem}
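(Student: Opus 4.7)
The plan is to establish the standard cycle $(1)\Rightarrow(2)\Rightarrow(3)\Rightarrow(1)$, mirroring the classical characterization of strong causality in Penrose \cite{Penrose-difftopology}, with the product topology $T_P$ playing the role usually played by the underlying manifold topology on $V$. This substitution is harmless because the basic ``open cylinders'' $B_{\epsilon}^h(x)\times(a,b)$ are in fact a base for the smooth-manifold topology of $V=M\times\mathbb{R}$, so all the classical local facts from Lorentzian geometry (openness of $I^{\pm}$, existence of convex normal neighbourhoods, etc.) transfer verbatim.

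The implication $(2)\Rightarrow(3)$ is immediate: since $(V,g)$ is a Hausdorff sliced space, both $T_M$ and $T_{\mathbb{R}}$ are Hausdorff, hence so is the product $T_P$, and the assumed equivalence $T_A\equiv T_P$ transports this property to $T_A$. For $(1)\Rightarrow(2)$ I would check both containments. The inclusion $T_A\subseteq T_P$ is a general fact: the chronological futures $I^+(x)$ and pasts $I^-(y)$ are open in the manifold topology of any spacetime, so each Alexandrov basic set $<x,y>$ is automatically open in $T_P$. For the reverse $T_P\subseteq T_A$, given $p$ inside a cylinder $W=B_{\epsilon}^h(x)\times(a,b)$, strong causality at $p$ produces a smaller neighbourhood $W'\subseteq W$ that no causal curve re-enters once it leaves; one then picks $u,v$ with $u\ll p\ll v$ inside $W'$ so that $<u,v>\subseteq W$, exhibiting a $T_A$-basic neighbourhood of $p$ contained in $W$.

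The delicate step is $(3)\Rightarrow(1)$, which I would prove contrapositively. Assuming strong causality fails at some $p\in V$, there is a $T_P$-neighbourhood $U$ of $p$ and a sequence of future-directed causal curves through $p$, each escaping $U$ and returning arbitrarily close to $p$. A limit-curve argument, exploiting the monotonicity of the slicing parameter $t$ along future-directed causal curves and the local compactness of the spatial slices $M_t$, produces a point $q\neq p$ which lies in every Alexandrov basic set $<a,b>$ containing $p$, contradicting the Hausdorffness of $T_A$. This is the step where the sliced structure is genuinely used, and it is the main obstacle I anticipate: one must guarantee that the limit produces a genuine point $q\in V$ (rather than an ``escape to infinity'') and that $q$ is truly inseparable from $p$ in $T_A$, all without invoking any bound on the lapse, shift or spatial metric, since the paper advertises precisely this generality.
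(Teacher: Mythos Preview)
Your cycle $(1)\Rightarrow(2)\Rightarrow(3)\Rightarrow(1)$ and the treatment of $(2)\Rightarrow(3)$ and $(1)\Rightarrow(2)$ match the paper's approach closely. In fact your argument for $T_A\subseteq T_P$ (openness of $I^{\pm}$ in the manifold topology, which you correctly identify with $T_P$) is cleaner than the paper's, which instead argues that each $\langle X,Y\rangle$ sits inside some sufficiently large cylinder---a statement that by itself does not yield openness. For the reverse inclusion $T_P\subseteq T_A$ both you and the paper run the same ``causally convex neighbourhood inside a given cylinder'' argument.

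Where you diverge is $(3)\Rightarrow(1)$. The paper does not prove this at all: it simply cites Penrose \cite{Penrose-difftopology}, where the implication ``$T_A$ Hausdorff $\Rightarrow$ strongly causal'' is established for \emph{any} spacetime, with no reference to a slicing. Your belief that ``this is the step where the sliced structure is genuinely used'' is therefore misplaced, and the obstacles you anticipate (escape to infinity of limit curves, absence of bounds on lapse/shift/spatial metric) do not arise if you invoke the general result. Your proposed direct argument via monotonicity of $t$ and a limit-curve construction is not wrong in spirit---$t$ is indeed a time function since the slices $M_t$ are spacelike---but it reinvents, in a restricted setting and with self-imposed difficulties, a theorem already available off the shelf. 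The paper's proof gains brevity by recognising this; your route would gain self-containment at the cost of the extra work you correctly flag.
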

\begin{proof}
2. implies 3. is obvious and that 3. implies 1. can be found in \cite{Penrose-difftopology}.

For 1. implies 2.: That $T_A$ is coarser than $T_P$ is trivial. Now, we consider $\epsilon > 0 $, such that $B_{\epsilon}^h (A) \in T_M$, so that $B_{\epsilon}^h (A) \times (a,b) = B \in T_P$. We let strong causality hold at an event $P$  and consider $P \in B \in T_P$. We show that there exists $<X,Y>\in T_A$, such that $P \in <X,Y> \subset B$. Now, consider a simple region $R$ in $<X,Y>$ which contains $P$ and $P \in Q$, where $Q$ is a causally convex-open subset of $R$. Thus, we have $U,V \in Q$, such that $P \in <U,V> \subset Q$. Finally, $P \in <U,V> \subset Q \subset B$ and this completes the proof.
\end{proof}

\section{Global Hyperbolicity of Sliced Spaces, Revisited.}

\begin{theorem}\label{2}
Let $(V,g)$ be a sliced space, where $V = M\times \mathbb{R}$, $M$ is an $n$-dimensional manifold ($n\ge 2$) and $g$ the $n+1$ Lorentz metric in $V$. Let $T_A$ be the Alexandrov spacetime topology on $V$ and $T_P$ the product topology on $V$. Then, $(V,g)$ is globally hyperbolic, iff for every basic-open set in $D$ in $T_A$ there exists a basic-open set $B$ in $T_P$, such that $D \subset B$. 
\end{theorem}
\begin{proof}
If $V$ is globally hyperbolic, then it is strongly causal (see \cite{Minguzzi}) and from Theorem \ref{1} we get that $T_P \equiv T_A$. Consider a basic-open set (an ``open diamond'') $D$, in $T_A$; then, $D \in T_P$. Choose an open diamond $B \in T_P$, such that $D \subset B$; the result follows.

Now, consider $D =I^+(x) \cap I^-(y) $ be a basic-open set in $T_A$. Then, there exists $B^h(x)$ in the manifold topology on $M$ and $(a,b)$ in the usual topology of $\mathbb{R}$, such that $D \subset B^h(x) \times (a,b) = B$. But, The closure of $B$ is compact (as a product of compact sets) and so the closure of $D$ will be compact too, as a closed subset of a compact set.

\end{proof}

We note that neither in Theorem \ref{1} nor in Theorem \ref{2} we made any hypothesis on the lapse function, shift function or on the spatial metric.

\end{document}